\newcommand{\mr}[1]{\mathrm{#1}}
\newcommand{\mf}[1]{\mathfrak{#1}}
\newcommand{\mc}[1]{\mathcal{#1}}
\newcommand{\Z}{{\bf Z}}
\newcommand{\zp}{{\bf Z}_p}
\newcommand{\qp}{{\bf Q}_p}
\newcommand{\Cl}{\mr{Cl}}
\newcommand{\mup}{\mu_{p^{\infty}}}
\newcommand{\La}{\Lambda}
\newcommand{\dirlim}[1]{\lim_{\substack{\rightarrow \\ #1}}}
\DeclareMathOperator{\Hom}{Hom}
\DeclareMathOperator{\Gal}{Gal}
\DeclareMathOperator{\Res}{Res}
\newtheorem{theorem}{Theorem}[section]
\newtheorem{proposition}[theorem]{Proposition}
\newtheorem{lemma}[theorem]{Lemma}
\newtheorem{conjecture}[theorem]{Conjecture}
\theoremstyle{definition}
\newtheorem{definition}[theorem]{Definition}
\theoremstyle{remark}
\newtheorem*{ack}{Acknowledgments}
\renewcommand{\baselinestretch}{1.2}
\numberwithin{equation}{section}
\begin{document}

\title{The reciprocity conjecture\\of Khare and Wintenberger}
\author{Romyar T. Sharifi\footnote{Department of Mathematics, University of Arizona, P.O.\ Box 210089, Tucson, AZ 85721, USA, sharifi@math.arizona.edu}}
\date{}
\maketitle

\begin{abstract}
	We prove a strengthening of the ``reciprocity conjecture'' of Khare and Wintenberger.
	The input to the original conjecture is 
	an odd prime $p$, a CM number field $F$ containing 
	the $p$th roots of unity, and a pair $(\mf{q}_1,\mf{q}_2)$ 
	of primes of the maximal totally real subfield $F^+$ of $F$
	that are inert in the cyclotomic $\zp$-extension $F_{\infty}^+/F^+$. 
	In analogy to a statement about generalized Jacobians of curves, the conjecture 
	asserts the equality of two procyclic subgroups of the Galois group of the maximal 
	pro-$p$ extension $\mc{M}$ of $F_{\infty}^+$ that is unramified outside $p$ and abelian over 
	$F^+$.  The first is the intersection with $\Gal(\mc{M}/F_{\infty}^+)$ of the closed subgroup of 
	$\Gal(\mc{M}/F^+)$ generated by the Frobenius elements of $\mf{q}_1$ and $\mf{q}_2$.
	The second is generated by the class of an exact sequence defining 
	the minus part of the $p$-part of the ray class group of $F_{\infty}$ of conductor 
	$\mf{q}_1\mf{q}_2$. 
\end{abstract}

\section{Introduction}

The goal of this paper is to prove the reciprocity conjecture of Khare and Wintenberger,
as found in \cite[Conjecture 5.5]{kw}.  The conjecture as originally stated asserts the
equality of two procyclic subgroups of the Galois group of the maximal abelian pro-$p$ extension
of a totally real field.  These subgroups fix the cyclotomic $\zp$-extension,
hence are conjecturally finite by Leopoldt's conjecture.  In fact, as we shall shortly explain, 
the reciprocity conjecture may be used to give an equivalent formulation of Leopoldt's conjecture 
in terms of ray class groups.  In this introduction, we give the original formulation and a slight extension of the conjecture and recall the motivation for the conjecture from \cite{kw}.   Let us begin straightaway with the formulation.

Let $p$ be an odd prime, let $F$ be a CM field containing $\mu_p$, and let $F^+$ be its maximal totally real subfield.  Let $F_{\infty}$ and $F_{\infty}^+$ denote the cyclotomic $\zp$-extensions of these respective fields, and let $\Gamma = \Gal(F_{\infty}/F)$.   Let $Q^+ = \{ \mf{q}_1, \mf{q}_2 \}$ be a set of two 
distinct finite primes of $F^+$ not lying over $p$, and suppose that these primes remain inert in 
$F_{\infty}^+$.  Let $\mc{M}$ (resp., $\mc{M}_{\infty}$) denote the maximal abelian pro-$p$ unramified outside $p$ extension of $F^+$ (resp., $F_{\infty}^+$).  

Let $\varphi_i$ denote the Frobenius
element in $\Gal(\mc{M}/F^+)$ attached to $\mf{q}_i$ for $i \in \{1,2\}$.  Each $\varphi_i$ restricts nontrivially to $F_{\infty}^+$, so the subgroup $\overline{\langle \varphi_1,\varphi_2 \rangle}$ 
topologically generated by $\varphi_1$ and $\varphi_2$ has a procyclic intersection with the kernel of this restriction map.  This provides the first of the two procyclic subgroups $M_Q$ and $N_Q$  of  $\Gal(\mc{M}/F_{\infty}^+)$ that are the subject of the reciprocity conjecture.  

\begin{definition} 
	The Frobenius line $M_Q$ is the
	intersection $\overline{\langle \varphi_1,\varphi_2 \rangle} \cap \Gal(\mc{M}/F_{\infty}^+)$.
\end{definition}

Let $A_{\infty}$ and $A_{\infty,\mf{q}}$ denote the $p$-parts of the
class group of $F_{\infty}$ and the ray class group of conductor $\mf{q}_1\mf{q}_2$
of $F_{\infty}$, respectively.  
We denote the minus parts of these groups, on which complex conjugation 
acts by $-1$, with a superscript ``$-$''.  
The definition of the ray class group gives rise to an exact sequence of $\zp[[\Gamma]]$-modules
\begin{equation} \label{simplerayseq}
	0 \to \mup \to A_{\infty,\mf{q}}^- \to A_{\infty}^- \to 0,
\end{equation}
the subgroup $\mup$ being identified with the $p$-power torsion in the residue field of 
$F_{\infty}^+$ for $\mf{q}_1$.  (Here, that the $\mf{q}_i$ are inert allows
the sequence to have this simple form, given a choice between the two primes.)  As shown by Iwasawa \cite[p.\ 75]{iwa-mod}, Kummer theory provides a canonical isomorphism between $\Hom(A_{\infty}^-,\mup)$ and
$\Gal(\mc{M}_{\infty}/F_{\infty}^+)$.  The $\Gamma$-coinvariants of the
latter group are canonically isomorphic to $\Gal(\mc{M}/F_{\infty}^+)$ and noncanonically,
i.e., up to a choice of generator of $\Gamma$, isomorphic to the continuous cohomology group 
$H^1(\Gamma,\Gal(\mc{M}_{\infty}/F_{\infty}^+))$.  
The next object was not named in \cite{kw}, so we give it a name to put in on a equal footing
with the Frobenius line. 
	
\begin{definition} 
	The ray class line $N_Q$ is the topological subgroup of $\Gal(\mc{M}/F_{\infty}^+)$ 
	generated by the image of the class of the extension \eqref{simplerayseq}
	of $\zp[[\Gamma]]$-modules in
	\begin{equation} \label{noncanon}
		H^1(\Gamma,\Gal(\mc{M}_{\infty}/F_{\infty}^+)) \cong \Gal(\mc{M}/F_{\infty}^+).
	\end{equation}
\end{definition}

We remark that while the class of \eqref{simplerayseq} depends up to sign on a choice of $\mf{q}_1$ over $\mf{q}_2$ and a choice of generator of $\Gamma$, the ray class line $N_Q$ does not.   We may now state the reciprocity conjecture of Khare and Wintenberger.  Despite its name, we will not employ class field theory in our approach to it.

\begin{conjecture}[Khare-Wintenberger]  
	The Frobenius line $M_Q$ and the ray class line $N_Q$ are equal.
\end{conjecture}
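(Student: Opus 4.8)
Throughout, write $\mc{X} = \Gal(\mc{M}_\infty/F_\infty^+)$, a finitely generated $\La$-module with $\mc{X}_\Gamma \cong \Gal(\mc{M}/F_\infty^+)$, and recall Iwasawa's isomorphism $\mc{X} \cong \Hom(A_\infty^-,\mup)$. The plan is to realise each of $M_Q$ and $N_Q$ as an explicit procyclic subgroup of $\mc{X}_\Gamma$ and to match the two. Both descriptions are governed by how the inert primes $\mf{q}_1,\mf{q}_2$ enter: on the Frobenius side through their Frobenius elements in $\Gal(\mc{M}/F^+)$, and on the ray class side through the residue fields at $\mf{q}_1,\mf{q}_2$, which — precisely because $\mf{q}_i$ is inert in $F_\infty^+/F^+$ — acquire all $p$-power roots of unity along $F_\infty^+$. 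The theorem will amount to saying that the second construction recovers the first.

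First I would record the Frobenius line concretely. The restriction $\Gal(\mc{M}/F^+) \twoheadrightarrow \Gamma^+ = \Gal(F_\infty^+/F^+)$ has kernel $\mc{X}_\Gamma$ and splits as a homomorphism of $\zp$-modules, $\Gamma^+$ being free of rank one. Since $\mf{q}_i$ is inert, $\bar\varphi_i := \varphi_i|_{F_\infty^+}$ topologically generates $\Gamma^+$; writing $\bar\varphi_2 = c\,\bar\varphi_1$ with $c \in \zp^\times$, one gets $M_Q = \overline{\langle \varphi_1^{-c}\varphi_2 \rangle}$ with $\varphi_1^{-c}\varphi_2 \in \mc{X}_\Gamma$, which generates $M_Q$ (canonically up to a unit). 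Only this formalism is needed; the content of the theorem is that the ray class sequence reproduces the element $\varphi_1^{-c}\varphi_2$ up to sign and a unit.

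Next I would reduce the ray class line. Applying the functor $\Hom_{\zp}(-,\mup)$ — exact, as $\mup$ is $\zp$-injective, and an anti-equivalence between discrete and compact $\La$-modules — to \eqref{simplerayseq} and using Iwasawa's isomorphism on the left-hand term produces an exact sequence of $\La$-modules
\begin{equation} \label{dualseq}
	0 \to \mc{X} \to \mc{Y} \to \zp \to 0, \qquad \mc{Y} := \Hom_{\zp}(A_{\infty,\mf{q}}^-,\mup),
\end{equation}
whose quotient $\Hom_{\zp}(\mup,\mup) \cong \zp$ carries the \emph{trivial} $\Gamma$-action: both copies of $\mup$ carry the cyclotomic action, because $F_\infty = F(\mup)$ makes reduction modulo a prime over $\mf{q}_1$ a $\Gamma$-equivariant isomorphism from $\mu_{p^\infty}(F_\infty)$ onto the $p$-power torsion of the residue field. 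Under the anti-equivalence the class of \eqref{simplerayseq} in $\mr{Ext}^1_\La(A_\infty^-,\mup)$ corresponds to the class of \eqref{dualseq} in $\mr{Ext}^1_\La(\zp,\mc{X}) = H^1(\Gamma,\mc{X})$; resolving $\zp$ by $0 \to \La \xrightarrow{\gamma-1}\La \to \zp \to 0$ and invoking \eqref{noncanon}, this class is represented by $(\gamma-1)\tilde y \bmod (\gamma-1)\mc{X}$ for any lift $\tilde y \in \mc{Y}$ of a generator of $\mc{Y}/\mc{X}$. Equivalently, the $\Gamma$-homology long exact sequence of \eqref{dualseq} gives
\[
	N_Q = \mr{im}\bigl(H_1(\Gamma,\zp) \xrightarrow{\partial} \mc{X}_\Gamma\bigr) = \ker\bigl(\Gal(\mc{M}/F_\infty^+) \to \mc{Y}_\Gamma\bigr),
\]
because for $\Gamma \cong \zp$ the cap product with a generator of $H_1(\Gamma,\zp) \cong \zp$ realises the isomorphism $H^1(\Gamma,\mc{X}) \cong \mc{X}_\Gamma$.

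The remaining, and hardest, step will be to identify $\ker(\Gal(\mc{M}/F_\infty^+) \to \mc{Y}_\Gamma)$ with the Frobenius line. For this I would establish an arithmetic description of $\mc{Y}$, refining Iwasawa's for $\mc{X}$, realising it as a Galois group over $F_\infty^+$ and pinning the lift $\tilde y$ to a Frobenius at $\mf{q}_1$ — equivalently $\mf{q}_2$. The ray class group $A_{\infty,\mf{q}}^-$ is built from ideal classes together with congruence data at $\mf{q}_1\mf{q}_2$, so its $\mup$-dual $\mc{Y}$ differs from $\mc{X}$ by a single procyclic ($\zp$) piece, which — because the local extension at each $\mf{q}_i$ is tamely ramified, with inertia--Frobenius relation $\sigma_{\mf{q}_i}\tau\sigma_{\mf{q}_i}^{-1} = \tau^{N\mf{q}_i}$, and because by inertness the residue field at $\mf{q}_i$ along $F_\infty^+$ contains $\mup$ — is canonically generated by the Frobenius of $\mf{q}_1$, respectively of $\mf{q}_2$; the two generators are related by the unit $c$ together with the ``diagonal'' class coming from $\mc{O}_{F_\infty}^\times \to (\mc{O}_{F_\infty}/\mf{q}_1\mf{q}_2)^\times$, which also produces the sign ambiguity in $N_Q$. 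Tracing $\tilde y$ through, one may take it to be a Frobenius lift of $\mf{q}_2$ relative to the section given by $\mf{q}_1$, so that $(\gamma-1)\tilde y$ — computed as the commutator $[\tilde\gamma,\tilde y]$ in the relevant non-abelian quotient of $\Gal(\overline{F^+}/F^+)$ and pushed into $\Gal(\mc{M}/F_\infty^+)$ — becomes exactly the image of $\varphi_1^{-c}\varphi_2$, whence $N_Q = M_Q$. The delicate points will be (a) proving the Kummer duality for ray class groups that pins down $\mc{Y}$ and $\tilde y$, and (b) the local bookkeeping at $\mf{q}_1$ and $\mf{q}_2$ converting the tame relation, the choice of $\mf{q}_1$ over $\mf{q}_2$, and the choice of generator of $\Gamma$ into the combination $\varphi_1^{-c}\varphi_2$ with the correct sign. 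Since throughout one transports a coboundary — a commutator — rather than evaluating a reciprocity pairing, the argument can be run entirely within Galois cohomology, with no appeal to global class field theory; the price is careful tracking of signs and twists through Iwasawa's isomorphism, the functor $\Hom_{\zp}(-,\mup)$, and the identification $H^1(\Gamma,\mc{X}) \cong \mc{X}_\Gamma$.
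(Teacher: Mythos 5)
Your skeleton is sound and agrees with the paper's: pass to the Kummer dual sequence $0 \to \mc{X} \to \mc{Y} \to \zp \to 0$ of \eqref{simplerayseq} (the paper's \eqref{dualrayseq}), and identify $N_Q$ with the image of the connecting map $\partial\colon H_1(\Gamma,\zp) \to \mc{X}_\Gamma$, equivalently with $\ker(\mc{X}_\Gamma \to \mc{Y}_\Gamma)$; the sign bookkeeping you gesture at is handled by the paper's Lemma \ref{pdnegclass}. But the step you yourself flag as ``the remaining, and hardest'' is left as a plan, and that plan is not obviously executable. You propose to ``realise $\mc{Y}$ as a Galois group over $F_\infty^+$'' with the lift $\tilde y$ a Frobenius at $\mf{q}_2$, so that $(\gamma-1)\tilde y$ becomes a commutator $[\tilde\gamma,\tilde y]$. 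Iwasawa's theorem identifies $\Hom(A_\infty^-,\mup)$ with a Galois group, but there is no equally ready identification of $\mc{Y}=\Hom(A_{\infty,\mf{q}}^-,\mup)$ under which the extra $\zp$-line is spanned by a Frobenius and the conjugation $\Gamma$-action reproduces the dual extension class; establishing something of that kind \emph{is} the content of the proof, not a lemma one can quote.

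The paper fills this gap by a different mechanism. Rather than a Galois-theoretic model of $\mc{Y}$, Proposition \ref{rayprop} constructs an explicit isomorphism $\phi\colon A_{\infty,\mf{q}}^- \to (\omega^H)^-$, where $\omega$ is the quotient of the direct limit $\mc{W}$ of $\bigoplus_{w\in Q_E}\mup(E_w)$ over number fields $E\subset\Omega$ by the diagonal $\mup$; the map sends $[\mf{a}]_{\mf{q}}$ to a $p^n$th root $\alpha$ of a generator of $\mf{a}^{p^n}$, reduced modulo $\mf{q}$. This realizes \eqref{shortrayseq} in $H$-cohomology. The Frobenius then enters on the Pontryagin-dual side, not via a lift $\tilde y$ in a ray-class Galois group: one embeds $\partial$ in the ladder \eqref{bigdiag}--\eqref{mapseq}, proves the sign identity $\pi\circ\iota = -\partial\circ\Res$ (Proposition \ref{mapdiagcomm}), and evaluates the inflation $\iota(\chi)$ by the Shapiro-type formula \eqref{shapiro}, where choosing the lift $\tilde\gamma_v\in G$ inside a decomposition group over $v$ forces $\iota_v^+(\chi)(\gamma_v) = \chi(\tilde\gamma_v) = \delta_v^\vee(\chi)(\gamma_v)$. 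So: right target, correct reduction of $N_Q$, but the arithmetic heart of the argument — Propositions \ref{rayprop} and \ref{mapdiagcomm} and the final cocycle computation — is not supplied, and the particular mechanism you sketch to fill it (``$\mc{Y}$ is a Galois group, $\tilde y$ is a Frobenius, $(\gamma-1)\tilde y$ is a commutator'') is neither established nor the route the paper actually takes.
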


Let us briefly consider the connection with Leopoldt's conjecture for totally real fields \cite{leopoldt},  
which provides something of an impetus for the investigation of the relationship between $M_Q$ and $N_Q$.  We recall the statement for an arbitrary number field.

\begin{conjecture}[Leopoldt]
	Let $E$ be a number field.  
	Then the topological closure of the embedding of the
	units $\mc{O}_E^{\times}$ of $E$ in the direct product of the completions $E_v^{\times}$ at
	the primes $v$ of $E$ lying over $p$ has $\zp$-rank equal to the rank of $\mc{O}_E^{\times}$
	as an abelian group.
\end{conjecture}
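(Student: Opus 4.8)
I note at the outset that this is Leopoldt's conjecture in full generality, one of the central open problems in algebraic number theory; the excerpt recalls it only to frame the reciprocity conjecture, and no proof is known for an arbitrary number field $E$. What I would actually carry out is the reduction to a concrete non-vanishing statement, the one case that transcendence theory settles, and an honest account of where the argument stalls.

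First I would rephrase the assertion in terms of the $p$-adic regulator. If $r$ is the rank of $\mc{O}_E^{\times}$ and $\varepsilon_1,\dots,\varepsilon_r$ is a system of fundamental units, then the closure of the diagonal image of $\mc{O}_E^{\times}$ in $\prod_{v \mid p} E_v^{\times}$ has $\zp$-rank equal to the rank over $\qp$ of the matrix $\bigl( \log_p \sigma(\varepsilon_j) \bigr)$, where $\sigma$ runs over the embeddings of $E$ into a fixed algebraic closure of $\qp$ and $\log_p$ is the $p$-adic logarithm in Iwasawa's sense (defined on all of $\overline{\qp}^{\times}$ and finite on the local units in question). A product-formula computation shows this rank is at most $r$, and Leopoldt's conjecture is precisely the statement that it equals $r$, i.e., that the $p$-adic regulator $R_p(E)$ is nonzero. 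This step is routine once one has fixed compatible embeddings and a basis for the image lattice.

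Second, for $E$ abelian over $\Q$ I would follow Brumer's argument. The theorem of Baker and Brumer states that $p$-adic logarithms of algebraic numbers are linearly independent over $\overline{\Q}$ as soon as they are linearly independent over $\Q$; after decomposing $\mc{O}_E^{\times} \otimes \zp$ into components under the action of $\Gal(E/\Q)$, the non-vanishing of $R_p(E)$ reduces to such an independence statement among $p$-adic logarithms of conjugates of cyclotomic units, which Baker--Brumer then supplies. In this case the argument is complete, and I would follow it essentially verbatim.

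The main obstacle---the reason this stays a proposal for general $E$---is exactly the passage beyond the abelian case: there is at present no transcendence input, nor any $p$-adic $L$-function rigidity, that forces $R_p(E) \neq 0$ without an ambient abelian structure to exploit. The reformulations available in the literature---including the ray-class-group reformulation that the reciprocity conjecture of this paper makes possible, whereby Leopoldt for $F^+$ becomes the finiteness of $\Gal(\mc{M}/F_{\infty}^+)$ and hence of $M_Q$ and $N_Q$---trade the conjecture for an equivalent hard statement but supply no such input. A genuine proof would have to come from a new idea of that kind.
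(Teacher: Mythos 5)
You are right that this is a \emph{conjecture}, not a theorem of the paper: it is stated in a \texttt{conjecture} environment, attributed to Leopoldt, and used only as motivation. The paper offers no proof and does not claim one; what it does is recall Iwasawa's equivalence (Leopoldt for a totally real field $\Leftrightarrow$ finiteness of $\Gal(\mc{M}/F_{\infty}^+)$) and then cite Khare--Wintenberger's Theorem \ref{equivform} to rephrase this as finiteness of the Frobenius lines $M_Q$. So there is nothing in the paper for your ``proof'' to be compared against, and your honest framing --- reduction to $R_p(E)\neq 0$, Brumer's theorem via Baker's $p$-adic linear independence for abelian $E$, and an acknowledged wall beyond the abelian case --- is an accurate summary of the state of the problem rather than a proof. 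That is exactly the correct thing to say here; the only thing I would flag is that calling this a ``proof proposal'' at all overstates what can be done, since as you yourself note no argument is known for general $E$, and the reformulation the paper supplies (via $M_Q$ and $N_Q$) is an equivalence, not an attack.
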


The rank of $\mc{O}_E^{\times}$ in Leopoldt's conjecture is always at least
the $\zp$-rank of the topological closure, but Leopoldt's conjecture asserts that it is never
less.  As Iwasawa pointed out in \cite[Section 2.3]{iwa-ext}, Leopoldt's conjecture 
for a totally real number field is equivalent
to the statement that its maximal abelian, unramified outside $p$, pro-$p$ extension is a finite extension of its cyclotomic $\zp$-extension.

It is, of course, sufficient to prove Leopoldt's conjecture for totally real fields for fields $F^+$ of the sort we consider, since any totally real field sits inside of such an $F^+$.  And, as we have
just noted, Leopoldt's conjecture for $F^+$ is equivalent to the statement that $\Gal(\mc{M}/F_{\infty}^+)$ is finite.  The following result of Khare and Wintenberger  \cite[Corollary 6.5]{kw}, which is a consequence
of the \v{C}ebotarev density theorem, tells us that Leopoldt's conjecture for $F^+$ and $p$  is equivalent to the finiteness of the ray class lines $N_Q$ for all pairs $(\mf{q}_1,\mf{q}_2)$.

\begin{theorem}[Khare-Wintenberger] \label{equivform}
	Leopoldt's conjecture holds for $F^+$ and $p$ if and only if the Frobenius line $M_Q$
	is a finite group for every pair $(\mf{q}_1,\mf{q}_2)$ of primes of $F^+$ not
	lying over $p$ and inert in $F^+_{\infty}/F^+$.
\end{theorem}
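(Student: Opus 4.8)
The plan is to transfer the statement into the module-theoretic structure of the Galois group $X := \Gal(\mc{M}/F^+)$, which is standard to know is a finitely generated $\zp$-module because $\mc{M}/F^+$ is an abelian pro-$p$ extension unramified outside the finite set of primes above $p$. Write $Y := \Gal(\mc{M}/F_{\infty}^+)$ and $\Gamma^+ := \Gal(F_{\infty}^+/F^+) \cong \zp$, so that there is an exact sequence $0 \to Y \to X \to \Gamma^+ \to 0$; comparing $\zp$-ranks, $Y$ is finite if and only if $X$ has $\zp$-rank $1$, which by the theorem of Iwasawa recalled in the introduction holds if and only if Leopoldt's conjecture holds for $F^+$ and $p$. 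I will also use two standard facts about Frobenius elements: for a prime $\mf q$ of $F^+$ prime to $p$, the element $\varphi_{\mf q} \in X$ is well defined since the extension is abelian and $\mf q$ is unramified in it, and $\mf q$ is inert in $F_{\infty}^+/F^+$ exactly when the image of $\varphi_{\mf q}$ in $\Gamma^+$ lies outside $p\Gamma^+$, i.e.\ topologically generates it.

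The forward direction is then immediate: if Leopoldt's conjecture holds, $Y$ is finite, and for every pair $Q = \{\mf q_1,\mf q_2\}$ the Frobenius line $M_Q = \overline{\langle \varphi_1,\varphi_2\rangle} \cap Y$ is a subgroup of $Y$, hence finite.

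For the converse I would argue by contraposition. Suppose Leopoldt's conjecture fails, so $Y$ is infinite and $X$ has $\zp$-rank at least $2$. Choose $x_1 \in X$ whose image in $\Gamma^+$ topologically generates it, and choose $y \in Y$ of infinite order; since $y$ maps to $0$ in $\Gamma^+$ while $x_1$ does not, the elements $x_1$ and $x_2 := x_1 y$ are $\qp$-linearly independent in $X \otimes_{\zp} \qp$ and $x_2$ also maps to a topological generator of $\Gamma^+$. Hence $\overline{\langle x_1, x_2\rangle}$ is a $\zp$-submodule of rank $2$ that surjects onto the rank-$1$ group $\Gamma^+$, so its intersection with $Y$ has $\zp$-rank at least $1$ and in particular is infinite. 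Next I would note that the two features of the pair $(x_1,x_2)$ I am relying on — that each $x_i$ projects outside $p\Gamma^+$, and that $x_1,x_2$ span a $\zp$-submodule of rank $2$, i.e.\ that some maximal minor of the matrix formed by them is nonzero — depend only on the images of $x_1$ and $x_2$ in $X/p^N X$ for $N$ large enough. Fix such an $N$ and apply the \v{C}ebotarev density theorem to the finite abelian extension of $F^+$ with Galois group $X/p^N X$: it produces infinitely many primes of $F^+$ with prescribed Frobenius class modulo $p^N X$, so one obtains two distinct primes $\mf q_1, \mf q_2$ of $F^+$, both prime to $p$, with $\varphi_{\mf q_i} \equiv x_i \pmod{p^N X}$. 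Then $Q = \{\mf q_1,\mf q_2\}$ consists of distinct primes prime to $p$ and inert in $F_{\infty}^+/F^+$, and $M_Q = \overline{\langle \varphi_{\mf q_1},\varphi_{\mf q_2}\rangle} \cap Y$ is infinite, contradicting the hypothesis. Hence Leopoldt's conjecture holds.

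The step I expect to be the main obstacle — and really the only point that is not formal — is this final approximation. The \v{C}ebotarev density theorem only realizes a Frobenius element that matches a prescribed value in a \emph{finite} quotient of $\Gal(\mc{M}/F^+)$, so one must verify that both inertness of $\mf q_i$ in $F_{\infty}^+/F^+$ and the rank-$2$ property of $\overline{\langle \varphi_{\mf q_1},\varphi_{\mf q_2}\rangle}$ are preserved when the exact elements $x_i$ are replaced by \v{C}ebotarev approximations. Both are open conditions on $(\varphi_1,\varphi_2) \in X \times X$ — $\qp$-linear independence is detected modulo a sufficiently high power of $p$, and being a topological generator of $\zp$ is detected modulo $p$ — so a single large value of $N$ suffices; the remaining ingredients (finite generation of $X$ over $\zp$, well-definedness of Frobenius in an abelian extension, and Iwasawa's equivalence) are entirely standard.
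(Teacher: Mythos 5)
The paper itself does not prove Theorem \ref{equivform}: it is quoted directly from Khare and Wintenberger (their Corollary 6.5), with only the remark that it ``is a consequence of the \v{C}ebotarev density theorem.'' Your argument supplies a correct, self-contained proof along exactly those lines. The forward direction follows from Iwasawa's equivalence of Leopoldt for $F^+$ with the finiteness of $Y=\Gal(\mc{M}/F_\infty^+)$, and for the converse you correctly observe that $X=\Gal(\mc{M}/F^+)$ is a finitely generated $\zp$-module, that the pair $(x_1, x_1 y)$ with $y\in Y$ of infinite order spans a rank-$2$ submodule surjecting onto $\Gamma^+$, and that both the rank-$2$ condition (nonvanishing of a $2\times 2$ minor of the coordinate matrix modulo torsion) and the inertness condition (image outside $p\Gamma^+$) are detected in $X/p^N X$ for $N$ large, so a \v{C}ebotarev approximation by Frobenius elements of two distinct primes prime to $p$ retains both properties and yields $M_Q$ infinite. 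I find no gap; this matches the strategy the paper attributes to the original authors.
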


In their work, Khare and Wintenberger explain that the reciprocity conjecture is a natural analogue in Iwasawa theory of a statement one can make for the generalized Jacobian $J_{P_1,P_2}$ of the singular curve obtained by identifying two rational points $P_1, P_2$ of a smooth projective curve $X$ over a field $k$ \cite[Section 5.3]{kw}.  
That is, the Tate module $T_{\ell}(J_{P_1,P_2})$ of the generalized Jacobian for a prime 
$\ell \neq \mr{char}\, k$
fits in an exact sequence of $\Z_{\ell}[[G_k]]$-modules
$$
	0 \to \Z_{\ell}(1) \to T_{\ell}(J_{P_1,P_2}) \to T_{\ell}(J) \to 0,
$$
where $J$ denotes the Jacobian of $X$ and $G_k$ is the absolute Galois group of $k$,
and where the map from $\Z_{\ell}(1)$ to the Tate module is given by fixing a choice of $P_1$ among the two primes.  
Using Weil duality, the class of this extension may be viewed as lying in the continuous
cohomology group $H^1(G_k,T_{\ell}(J))$, and this class is identified via Kummer theory 
with the class of the divisor $(P_2)-(P_1)$ in the $\ell$-part of $J(k)$.

We remark that the latter statement is a bit stronger than what would be analogous to 
the reciprocity conjecture,
which asserts an equality of subgroups, rather than elements.  However, this may be fixed.
We make several choices corresponding to a choice of $\mf{q}_1$ over $\mf{q}_2$.
Let $m_Q$ denote the unique element of $M_Q$ that is $\varphi_1 \varphi_2^j$ for some
$j \in \zp^{\times}$.  The $\mup$ found in \eqref{simplerayseq} is more canonically
written as $(\mup \oplus \mup)/\mup$, with the $i$th coordinate corresponding to the $p$-power
roots of unity in the residue field of $F_{\infty}^+$ at $\mf{q}_i$
for $i \in \{1,2\}$.  We consider the isomorphism taking the image 
of $(\alpha,\beta)$ to $\alpha\beta^{-1}$.
Finally, evaluation at a choice of generator of $\Gamma$ followed by restriction provides 
the isomorphism \eqref{noncanon}, and we may take the restriction of $\varphi_1$ as a canonical choice.  With these consistent choices, the class of \eqref{simplerayseq} defines a
generator $n_Q$ of $N_Q$.  A refined form of the reciprocity conjecture
may then be stated.  As we shall prove it, we state it as a theorem.

\begin{theorem} \label{recconjelts}
	The generators $m_Q$ and $n_Q$ of the Frobenius line and the ray class line,
	respectively, are the negatives of each other.
\end{theorem}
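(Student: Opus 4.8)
The plan is to work entirely on the level of Kummer theory and Galois cohomology, translating both $m_Q$ and $n_Q$ into elements of a single well-understood group and comparing them there. First I would make explicit the Kummer-theoretic picture: by Iwasawa's theorem, $\Gal(\mathcal{M}_\infty/F_\infty^+) \cong \Hom(A_\infty^-, \mup)$, and the class of \eqref{simplerayseq} in $H^1(\Gamma, \Hom(A_\infty^-,\mup))$ is obtained by applying the covariant functor $\Hom(-,\mup)$ to the short exact sequence and taking the connecting homomorphism in $\Gamma$-cohomology. Evaluating this class at the chosen generator $\gamma$ of $\Gamma$ produces a specific homomorphism $A_\infty^- \to \mup$ whose construction uses a $\Gamma$-equivariant splitting of \eqref{simplerayseq} over $\zp$ and measures the failure of that splitting to be $\Gamma$-equivariant. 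The first task, then, is to identify this homomorphism concretely: it should send the class of an ideal $\mathfrak{a}$ of $F_\infty$ (in the minus part) to the ratio $\alpha\beta^{-1}$ coming from a generator of a principalization of $(\gamma-1)\mathfrak{a}$ read in the residue fields at $\mathfrak{q}_1$ and $\mathfrak{q}_2$ — essentially a Hilbert symbol / tame symbol computation.

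Second, I would reinterpret the Frobenius line element $m_Q$ through the same Kummer pairing. Under the isomorphism \eqref{noncanon}, $\Gal(\mathcal{M}/F_\infty^+) \cong \Hom(A_\infty^-,\mup)_\Gamma$, and $m_Q = \varphi_1\varphi_2^j$ restricts to an element of this group; its value on a class $c \in A_\infty^-$ is computed by the explicit reciprocity / Artin-map description of how Frobenius acts on Kummer generators. Concretely, if $c$ is represented by a class in $\mathcal{M}_\infty/F_\infty^+$ cut out by some $x$ with $x^{1/p^n}$ generating a subextension, then $\varphi_i(x^{1/p^n})/x^{1/p^n}$ is governed by the image of $x$ under the local Artin map at $\mathfrak{q}_i$, which for a unit at $\mathfrak{q}_i$ in the unramified-outside-$p$ setting reduces to the reduction of $x$ mod $\mathfrak{q}_i$ — again a residue-field symbol. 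The key point is that both $m_Q$ and $n_Q$ get expressed as the \emph{same} bilinear expression built from reductions modulo $\mathfrak{q}_1$ and $\mathfrak{q}_2$, once one matches up the exact sequence \eqref{simplerayseq} with the definition of the ray class group in terms of ideals coprime to $\mathfrak{q}_1\mathfrak{q}_2$ modulo principal ideals generated by elements $\equiv 1$.

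Third comes the bookkeeping that pins down the sign. The delicate points are: (i) the connecting homomorphism in the $\Gamma$-cohomology of \eqref{simplerayseq} introduces a sign depending on whether one uses $\gamma-1$ or $1-\gamma$, and this must be reconciled with the choice of $\gamma$ implicit in \eqref{noncanon}; (ii) the identification of $\mup$ inside $A_{\infty,\mathfrak{q}}^-$ as $(\mup\oplus\mup)/\mup$ via $(\alpha,\beta)\mapsto\alpha\beta^{-1}$ orients the extension class relative to the choice of $\mathfrak{q}_1$ over $\mathfrak{q}_2$, which is the same orientation used in defining $m_Q = \varphi_1\varphi_2^j$; and (iii) the Artin reciprocity law carries the arithmetically normalized Frobenius, so that the local symbol at $\mathfrak{q}_i$ pairs a uniformizer-free unit to its reduction with a sign that flips when one passes from the Kummer generator's transformation to the ideal-class side. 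Carrying these three sign conventions through carefully, I expect each to be individually harmless but their composite to produce exactly one net sign, yielding $m_Q = -n_Q$.

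The main obstacle I anticipate is step one: making the extension class of \eqref{simplerayseq} genuinely explicit as a homomorphism $A_\infty^- \to \mup$ in a form directly comparable to the Frobenius action, rather than merely abstractly. This requires choosing compatible set-theoretic splittings at every layer $F_n$ and controlling them in the inverse limit, and then recognizing the resulting cocycle as a tame-symbol expression. Once that identification is in hand, the comparison with the Artin map is the standard statement that the global Artin symbol is the product of local symbols, and the sign emerges from the explicit local reciprocity law at the tamely ramified primes $\mathfrak{q}_1,\mathfrak{q}_2$ — a clean but convention-sensitive computation.
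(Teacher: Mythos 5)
Your plan and the paper's proof share a common kernel — tracing $p^n$th roots of ideal generators through the residue fields at the primes of $Q$ — but diverge substantially in formalism, and your sign analysis is not yet a proof.

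The paper (see the remark after its Conjecture statement) deliberately avoids class field theory; it never invokes a local or global Artin map. Your step two, which expresses $m_Q$ via the explicit local reciprocity law at the $\mathfrak{q}_i$, is therefore a genuinely different route. The paper instead observes that $\varphi_i$ is simply the image in $\mf{X}^+$ of an element $\tilde\gamma_v$ of the decomposition group of $G = \Gal(\Omega/F)$ at a prime over $\mathfrak{q}_i$, and then the formula $\delta_v^{\vee}(\chi)(\gamma_v) = \chi(\tilde\gamma_v)$ is essentially tautological once the cocycle representative for the inflation map is written down (equation \eqref{shapiro}). No explicit reciprocity law is needed. Likewise, your step one — realizing the extension class of \eqref{simplerayseq} as a concrete map $A_\infty^- \to \mup$ by choosing compatible splittings at every layer $F_n$ and controlling them in the inverse limit — is exactly the difficulty that the paper's Proposition \ref{rayprop} sidesteps: rather than splitting the sequence, it builds an isomorphism of exact sequences from the ray-class sequence to the $H$-cohomology sequence of $\mc{W}/\mup$, working at the infinite level throughout and never choosing a splitting. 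Your explicit reciprocity approach could in principle be carried out, but it imports more machinery than the statement requires.

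The real gap is in your third step. You write that you ``expect each [sign convention] to be individually harmless but their composite to produce exactly one net sign.'' That is the whole content of Theorem \ref{recconjelts}: once one knows the two procyclic lines agree (Theorem \ref{reciprocity}), the only thing left to prove is the sign, and expectation is not an argument. The paper makes the sign precise in two localized places: Lemma \ref{pdnegclass} shows that passing from a short exact sequence to its Pontryagin dual negates the extension class in $H^1(\Gamma,-)$, and Proposition \ref{mapdiagcomm} produces a second explicit sign, $\pi \circ \iota = -\partial \circ \Res$, from comparing the connecting homomorphism of \eqref{neededseq} with the long exact sequence in $G$-cohomology. Your plan would need a comparably concrete computation — for example, writing out the tame symbol and the cocycle for the extension class with the same normalizations and exhibiting the sign — before it could be considered a proof. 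As a lesser point, your reference to ``tamely ramified primes'' and tame symbols at $\mathfrak{q}_1, \mathfrak{q}_2$ for the Frobenius side is imprecise: $\mathcal{M}$ is unramified outside $p$, so $\mathcal{M}/F^+$ is unramified at the $\mathfrak{q}_i$ and $\varphi_i$ is an honest Frobenius; the tame ramification at the $\mathfrak{q}_i$ enters only on the ray-class side of the comparison.
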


In fact, we will prove in Theorem \ref{reciprocity} 
a strengthening of this conjecture in which we allow $Q^+$ to be an arbitrary
finite set of finite primes of $F^+$ not lying over $p$ that are no longer assumed to
be inert in the extension $F_{\infty}^+/F^+$.

The structure of this paper
is simple.   In Section \ref{conjecture}, we reformulate and generalize the statement of the 
conjecture.  In Section \ref{proof}, we provide the proof of our more general statement.  Our 
proof will use little beyond standard facts regarding Galois cohomology and Kummer theory 
for number fields, hence the paucity of references.

\begin{ack}
	The author thanks Chandreshekhar Khare for introducing him to the reciprocity 
	conjecture and for his helpful comments on drafts of this work.  He was supported in 
	part during the preparation of this work by NSF award DMS-0901526. 
\end{ack}	

\section{The conjecture} \label{conjecture}

In this section, we start afresh and introduce the strengthening of the reciprocity conjecture that we intend to prove.  This requires a reasonable amount of notation and set-up, so let us embark upon the task.

Let $p$ be an odd prime number.  Fix a CM number field $F$ containing $\mu_p$, and let $F^+$ be its maximal totally real subfield.  Let $Q^+$ be a fixed finite set of finite primes of $F^+$ not lying over $p$ 
and $Q$ the set of primes of $F$ lying above them.  Let $\mf{q}$ denote the product of the
primes in $Q$.  More generally, for any algebraic extension $E$ of $F^+$,
we will let $Q_E$ denote the set of primes of $E$ lying over those in $Q^+$.  If $E$ is 
a number field, we let $\mf{q}_E$ denote the product of the primes in $Q_E$, and we let $E_w$ denote the completion of $E$ at $w \in Q_E$.

We let $F_{\infty}$ denote the cyclotomic $\zp$-extension of $F$ and $F_{\infty}^+$ its maximal totally real subfield.  The (finite) sets of primes above $Q^+$ in $F_{\infty}^+$ and $F_{\infty}$ will be
denoted $Q_{\infty}^+$ and $Q_{\infty}$, respectively.  Let $\mf{q}_{\infty}$ denote the
product of the primes in $Q_{\infty}$.
Set $\Gamma = \Gal(F_{\infty}/F)$ and $\Lambda = \zp[[\Gamma]]$.  

We let $\Omega$ denote the maximal unramified outside the primes over $p$ (or, $p$-ramified) extension
of $F$, and we set $G = \Gal(\Omega/F)$ and $H = \Gal(\Omega/F_{\infty})$.  As usual, for a $\Z[\Gal(F_{\infty}/F_{\infty}^+)]$-module $B$, we let 
$$	
	B^{\pm} = \{ b \in B \mid s(b) = \pm b \}
$$
for the generator $s$ of $\Gal(F_{\infty}/F_{\infty}^+)$, which is to say complex conjugation. 

Let $\mf{X}$ denote the Galois group of the maximal $p$-ramified abelian
pro-$p$ extension of $F$. 
Let $\mc{O}$ denote the ring of integers of  $F$.
Let $\Cl$ denote the class group of $\mc{O}$ and $A$ its $p$-part.
Similarly, let $\Cl_{\mf{q}}$ denote the ray class group of conductor $\mf{q}$ 
and $A_{\mf{q}}$ its $p$-part.  
The corresponding objects over $F_{\infty}$ will be denoted with a subscript $\infty$.  Note that $\mf{X}_{\infty}$ is a finitely generated (compact) $\La$-module, 
while $A_{\infty}$ and $A_{\infty,\mf{q}}$ are cofinitely generated (discrete) $\La$-modules.

For each $v \in Q^+$, let $D_v$ denote the
decomposition group at $v$ in $\mf{X}^+$, and let $\Gamma_v$ denote the decomposition
group at $v$ in $\Gamma$.  Note that $\Gamma_v$ is nontrivial as no prime ideal splits completely in the cyclotomic $\zp$-extension of a number field.  As $D_v$ is procyclic, this forces the restriction map $D_v \to \Gamma_v$ to be an isomorphism.
The direct sum of the inverses $\Gamma_v \to D_v$ of the restriction isomorphisms composed
with the inclusion maps $D_v \to \mf{X}^+$ yields a homomorphism
$$
	\delta \colon \bigoplus_{v \in Q^+} \Gamma_v \to \mf{X}^+.
$$
Note that the group $(\mf{X}_{\infty}^+)_{\Gamma}$ of $\Gamma$-coinvariants
is canonically isomorphic to the Galois group of the largest subextension of the maximal $p$-ramified
abelian pro-$p$ extension of $F_{\infty}^+$ that is actually abelian over $F^+$.
We therefore have a short exact sequence of abelian Galois groups
$$
	0 \to (\mf{X}_{\infty}^+)_{\Gamma} \to \mf{X}^+ \to \Gamma \to 0.
$$
We define $M_Q$ to be the image of the induced map $\delta^0$ in the commutative diagram
\begin{equation} \label{frobdiag}
	\SelectTips{cm}{} \xymatrix@C=20pt@R=14pt{
		0 \ar[d] & 0 \ar[d] \\
		\bigoplus_{Q^+}^0 \Gamma_v \ar@{-->}[r]^-{\delta^0} \ar[dd]
		& (\mf{X}_{\infty}^+)_{\Gamma} \ar[dd] \\ \\
		\bigoplus_{Q^+} \Gamma_v \ar[r]^-{\delta} \ar[dd] & \mf{X}^+ \ar[dd]\\ \\
		\Gamma \ar@{=}[r] & \Gamma \ar[d] \\
		& 0
	}
\end{equation}
with exact columns, 
where $\bigoplus^0$ denotes elements of the direct sum with trivial sum (or product), in
this case in $\Gamma$.  (As we have not assumed that any $v \in Q^+$ is inert in $F_{\infty}^+$, the left column need not be right exact.)
This image is the Frobenius module $M_Q$ of Khare-Wintenberger \cite[Definition 4.1]{kw}.    

By its definition, the $\mf{q}$-ray class group $\Cl_{\infty,\mf{q}}$ of $F_{\infty}$ fits into an 
exact sequence
$$
	1 \to \mc{O}_{\infty,\mf{q}}^{\times} \to  \mc{O}_{\infty}^{\times}
	\to (\mc{O}_{\infty}/\mf{q})^{\times} \to \Cl_{\infty,\mf{q}} \to \Cl_{\infty} \to 0,
$$
where $\mc{O}_{\infty,\mf{q}}^{\times}$ denotes the group of units in $\mc{O}_{\infty}$
that are congruent to $1$ modulo $\mf{q}$.
Since the minus part of  
$\mc{O}_{\infty}^{\times}$ is its group of roots of unity, this gives rise to
an exact sequence
\begin{equation} \label{rayclassseq}
	0 \to \mup(F_{\infty}) \to \bigoplus_{u \in Q_{\infty}^+} \mup(F^+_{\infty,u}) \to A_{\infty,\mf{q}}^-
	\to A_{\infty}^- \to 0
\end{equation}
on $p$-parts of minus parts, where the first map is the diagonal embedding.

Note that $A_{\infty}^-$ is Kummer dual to $\mf{X}_{\infty}^+$, which is
to say dual via homomorphisms to $\mup$.  Therefore, the 
Kummer dual of the exact sequence
\begin{equation} \label{shortrayseq}
	0 \to \bigoplus_{u \in Q_{\infty}^+}\!\!{}'\, \mup \to A_{\infty,\mf{q}}^- \to A_{\infty}^- 
	\to 0
\end{equation}
induced by \eqref{rayclassseq}, where $\bigoplus'$ denotes quotient by the diagonal,
has the form
\begin{equation} \label{dualrayseq}
	0 \to \mf{X}_{\infty}^+ \to \Hom(A_{\infty,\mf{q}}^-,\mup) \to 
	\bigoplus_{u \in Q_{\infty}^+}\!\!{}^0\, \zp \to 0.
\end{equation}
In $\Gamma$-homology, we have canonical isomorphisms $H_1(\Gamma,\zp) \cong \Gamma$ and
$$
	H_1\Bigg(\Gamma,\bigoplus_{\substack{u \in Q_{\infty}^+ \\ u \mid v}} \zp\Bigg)
	\cong \Bigg(\bigoplus_{\substack{u \in Q_{\infty}^+ \\ u \mid v}} \Gamma
	 \Bigg)\Big.^{\Gamma}
	\cong \Gamma_v
$$
for each $v \in Q^+$, the last map being induced by multiplication in $\Gamma$.
It follows that the connecting homomorphism in the $\Gamma$-homology of \eqref{dualrayseq} has the form
$$
	\kappa^0 \colon \bigoplus_{v \in Q^+}\!\!{}^0\, \Gamma_v \to (\mf{X}_{\infty}^+)_{\Gamma}.
$$

With both $\delta^0$ and $\kappa^0$ are canonically defined, it is reasonable to consider the following
statement.  

\begin{theorem} \label{reciprocity}
	The map $\kappa^0$ is equal to $\delta^0$.
\end{theorem}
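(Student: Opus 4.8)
The plan is to compute the connecting homomorphism $\kappa^0$ explicitly and match it with the Frobenius map $\delta^0$, working one prime of $Q^+$ at a time. Since $(\mf{X}_\infty^+)_\Gamma$ injects into $\mf{X}^+$, it suffices to compare the two induced maps $\bigoplus^0_{v}\Gamma_v\to\mf{X}^+$; and since every element of $\bigoplus^0_v\Gamma_v$ is a sum of ``two-prime'' elements (supported on a pair $\{v,v'\}\subseteq Q^+$, with opposite entries lying in $\Gamma_v\cap\Gamma_{v'}$), it is enough to pin down, for each $v$, the image of a generator of $\Gamma_v$, the non-inert case being then read off from the $\Gamma$-orbit structure of the primes above $v$. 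The bridge between the two constructions is Iwasawa's Kummer-theoretic isomorphism $\mf{X}_\infty^+\cong\Hom(A_\infty^-,\mup)$ together with the observation, implicit in \eqref{rayclassseq}, that $\Gamma$ acts on the residue-field roots of unity $\mup(F^+_{\infty,u})$ through the decomposition group $\Gamma_v$ of $v$, a generator acting as the residue Frobenius $\zeta\mapsto\zeta^{N(v)}$.

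To carry out the computation I would resolve $\Z_p$ by $0\to\La\xrightarrow{\gamma-1}\La\to\Z_p\to 0$ for a topological generator $\gamma$ of $\Gamma$, so that $H_1(\Gamma,-)$ and $H_0(\Gamma,-)$ are invariants and coinvariants and $\kappa^0$ carries a $\Gamma$-fixed class $z$ to the class of $(\gamma-1)\tilde z$, for any lift $\tilde z\in\Hom(A^-_{\infty,\mf{q}},\mup)$. To evaluate this I would pass to the Pontryagin-dual side: for a cofinitely generated discrete $\La$-module $M$ one has a canonical isomorphism $H^i(\Gamma,\Hom(M,\mup))\cong\Hom(H_i(\Gamma,M),\qp/\zp)$ that is compatible with the long exact sequences, so $\kappa^0$ is dual, up to the evident twist, to the connecting homomorphism of \eqref{shortrayseq} in $\Gamma$-cohomology, which in turn is computed from the four-term sequence \eqref{rayclassseq}. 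In \eqref{rayclassseq} the left-hand term $\bigoplus_{u}\mup(F^+_{\infty,u})$ is, for each $v$, induced from the $\Gamma_v$-module $\mup$ with its residue-Frobenius action; by Shapiro's lemma its $\Gamma$-cohomology is the $\Gamma_v$-cohomology of $\mup$, and the map ``$\zeta\mapsto$ the class of a global element congruent to $\zeta$'' becomes, after dualizing, evaluation of a Kummer cocycle at the Frobenius. Tracing this through, the $v$-component of $\kappa^0$ sends a generator of $\Gamma_v$ to the element of $\mf{X}_\infty^+$ that is Kummer-dual to the composite $\mup(F^+_{\infty,u})\to A^-_{\infty,\mf{q}}\to A^-_\infty$, and, via Iwasawa's isomorphism, that element is precisely the Frobenius of $v$ in $\mf{X}_\infty^+$ — i.e.\ the value of $\delta^0$ on that generator.

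It then remains to reconcile all the normalizations so that the equality $\kappa^0=\delta^0$ holds on the nose with no stray sign: the diagonal embedding and the ``quotient by the diagonal'' in \eqref{rayclassseq}--\eqref{shortrayseq}, the twist and degree shift in the cohomology/homology duality, and the chosen identification $H_1(\Gamma,\bigoplus_{u\mid v}\zp)\cong\Gamma_v$ by multiplication in $\Gamma$ are exactly the bookkeeping for this, and I would verify it inside the one-prime local computation. I expect the real obstacle to be this middle step — converting the connecting homomorphism of the Kummer-dual sequence into the residue Frobenius — whose ingredients all require care: the compatibility of the cohomology/homology duality with connecting maps, the Shapiro identification of the residue-field term's cohomology with $\Gamma_v$-cohomology carrying the Frobenius action, and the Kummer-theoretic fact that the $\mf{X}_\infty^+$-class dual to ``residue $\mapsto$ ideal class'' is the Frobenius. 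The sign reconciliation and the passage from the inert to the general case, though routine, are where errors would most readily creep in.
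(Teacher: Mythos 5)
Your plan shares the paper's underlying mechanism --- dualize the class of the ray-class extension, reduce by Shapiro to a per-prime computation, and identify the answer with a Frobenius via Kummer theory --- but it diverges in how it proposes to actually get at the connecting homomorphism, and it is precisely there that the sketch has a genuine gap.

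Two concrete problems. First, the sentence ``the $v$-component of $\kappa^0$ sends a generator of $\Gamma_v$ to the element of $\mf{X}_\infty^+$ that is Kummer-dual to the composite $\mup(F^+_{\infty,u})\to A^-_{\infty,\mf{q}}\to A^-_\infty$'' is ill-posed: that composite is the zero map, since the image of the first arrow is the kernel of the second. What you actually need to extract is data attached to the \emph{extension class}, not to a composite of the maps in the sequence, and the sentence as written does not supply a well-defined element of $\mf{X}_\infty^+$. Second --- and this is the step you yourself flag as the real obstacle --- computing the $\Gamma$-cohomology connecting map of \eqref{shortrayseq} requires lifting $\Gamma$-fixed classes from $A_\infty^-$ into $A^-_{\infty,\mf{q}}$ and tracking the result, and $A^-_{\infty,\mf{q}}$ is not naturally the $H$-invariants (or cohomology) of any tractable Galois module. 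The paper's Proposition~\ref{rayprop} is exactly the missing ingredient: it constructs an explicit isomorphism $\phi\colon A^-_{\infty,\mf{q}}\xrightarrow{\sim}(\omega^H)^-$, where $\omega = \mc{W}/\mup$ is built from local roots of unity over the full $p$-ramified extension $\Omega$, and this is what turns the ray-class connecting map into an $H$-cohomological boundary map that can be compared with the Frobenius data sitting in $G$-cohomology. Without that realization (or something playing its role), ``tracing this through'' has no concrete starting point. Relatedly, your plan to evaluate $\kappa^0$ one prime at a time presupposes an extension from $\bigoplus^0_v\Gamma_v$ to $\bigoplus_v\Gamma_v$; this is what the paper's map $\kappa$, defined via the $G$-cohomology diagram and shown compatible with $\kappa^0$ in Proposition~\ref{mapdiagcomm} (which is also where the sign $\pi\circ\iota=-\partial\circ\Res$ is pinned down), supplies --- it does not come for free.

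In short: the strategy (projective resolution, homology/cohomology duality, Shapiro, Kummer) names the right tools and is a plausible alternative framing to the paper's $G$-cohomology mediation, but the load-bearing middle step is left as an acknowledged hope, its stated conclusion is ill-posed, and the essential device making that step computable --- the explicit $\mc{W}$-theoretic realization of the ray class group --- is absent.
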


Before we proceed to the proof, let us compare Theorem \ref{reciprocity} with the formulation of the reciprocity conjecture given in \cite{kw} and the introduction.  So, suppose that $|Q^+_{\infty}| = 2$.  In this case, the image of $\kappa^0$ is the ray class line $N_Q$.  More precisely, Khare and Wintenberger consider the class 
$\xi_Q$ of the exact sequence \eqref{shortrayseq} in 
\begin{multline*}
	H^1(\Gamma,\Hom(A_{\infty}^-,(\mup \oplus \mup)')) \cong
	H^1(\Gamma,\Hom(A_{\infty}^-,\mup))\\ \cong H^1(\Gamma,\mf{X}_{\infty}^+)
	\cong \Hom(\Gamma,(\mf{X}_{\infty}^+)_{\Gamma}),
\end{multline*}
where the isomorphism $(\mup \oplus \mup)' \xrightarrow{\sim} \mup$ takes $(\alpha,\beta)$ to $\alpha \beta^{-1}$, which requires a choice of one of the two primes in $Q$.  They then evaluate $\xi_Q$
on a noncanonical choice of generator of $\Gamma$  and define $N_Q$ to be the $\zp$-submodule it generates  \cite[Section 5.1]{kw}, which is independent of the choices made.  We claim that $\xi_Q$ is the negative of the class in $\Gamma$-cohomology of the dual sequence \eqref{dualrayseq}, 
which also lies in
$$
	H^1(\Gamma,\Hom((\zp \oplus \zp)^0,\mf{X}_{\infty}^+)) \cong
	H^1(\Gamma,\mf{X}_{\infty}^+),
$$
the isomorphism $(\zp \oplus \zp)^0 \xrightarrow{\sim} \zp$ used here taking $(a,-a)$ to $a$, with
the first coordinate corresponding to the same prime of $Q$ as before.  The latter class
as viewed in $(\mf{X}_{\infty}^+)_{\Gamma}$ agrees with $\kappa^0$ evaluated on the chosen generator.  Supposing the claim, since the $\zp$-span of the image of $\delta^0$ is by definition the Frobenius line $M_Q$, we see that Theorem \ref{reciprocity} implies the reciprocity conjecture.  Moreover, in the notation of the introduction, we have $m_Q = \delta^0(\varphi_1)$ and $n_Q = -\kappa^0(\varphi_1)$, so
Theorem \ref{reciprocity} implies Theorem \ref{recconjelts} as well.

As for the claim, it follows easily from the following simple lemma.

\begin{lemma} \label{pdnegclass}
	The class in $H^1(\Gamma,\Hom(C,A))$ of an exact sequence of finitely generated 
	$\Lambda$-modules
	$$
		0 \to A \xrightarrow{\iota} B \xrightarrow{\pi} C \to 0
	$$
	is sent to the negative of the class in $H^1(\Gamma,\Hom(A^{\vee},C^{\vee}))$ of the
	Pontryagin dual exact sequence under the isomorphism
	$$
		\Hom(C,A) \xrightarrow{\sim} \Hom(A^{\vee},C^{\vee}), \qquad f \mapsto f^{\vee}
	$$
	given by $f^{\vee}(\phi) = \phi \circ f$ for $f \in \Hom(C,A)$ and $\phi \in A^{\vee}$.
\end{lemma}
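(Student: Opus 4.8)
The plan is to compute explicitly the cocycle representing the class of an extension in terms of a set-theoretic splitting, and track what happens to it under Pontryagin duality. So let $\sigma$ be a topological generator of $\Gamma$ (both $H^1$ groups are computed against $\Gamma \cong \zp$, so a cocycle is determined by its value on $\sigma$). First I would choose a continuous $\Lambda$-linear-modulo-$\Gamma$ splitting, i.e.\ a continuous $\zp$-linear section $s \colon C \to B$ of $\pi$. Then the class of the extension $0 \to A \xrightarrow{\iota} B \xrightarrow{\pi} C \to 0$ in $H^1(\Gamma, \Hom(C,A))$ is represented by the cocycle $c$ whose value on $\sigma$ is the homomorphism $c_\sigma \in \Hom(C,A)$ determined by $\iota(c_\sigma(x)) = \sigma(s(\sigma^{-1}x)) - s(x)$ for $x \in C$ — this is the standard measure of the failure of $s$ to be $\Gamma$-equivariant. (One should fix once and for all the sign/normalization convention for the connecting map so that this and the dual computation use the same one; this is where a stray sign could enter, so I would be careful to state the convention.)

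Next I would write down the Pontryagin dual sequence $0 \to C^\vee \xrightarrow{\pi^\vee} B^\vee \xrightarrow{\iota^\vee} A^\vee \to 0$ and observe that $s^\vee \colon B^\vee \to C^\vee$ is a continuous $\zp$-linear section of $\pi^\vee$ (since $\pi^\vee$ is injective and $s^\vee \circ \pi^\vee = (\pi \circ s)^\vee = \mr{id}$). Using $s^\vee$ to compute the class of the dual sequence in $H^1(\Gamma, \Hom(A^\vee, C^\vee))$, its value on $\sigma$ is the homomorphism $d_\sigma$ with $\pi^\vee(d_\sigma(\phi)) = \sigma(s^\vee(\sigma^{-1}\phi)) - s^\vee(\phi)$ for $\phi \in A^\vee$. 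The key point is then a direct dualization identity: for $\phi \in A^\vee$ and $x \in C$ one has $\langle \pi^\vee(d_\sigma(\phi)), s(x)\rangle$ unwinding (using $\langle \psi, b\rangle$ for the pairing $B^\vee \times B$ and the compatibilities $\langle s^\vee(\psi), x\rangle = \langle \psi, s(x)\rangle$, $\langle \sigma \psi, b\rangle = \langle \psi, \sigma^{-1} b\rangle$) to $\langle \phi, \iota(c_\sigma(x))\rangle$ up to the sign coming from $\sigma(s(\sigma^{-1}x)) - s(x)$ versus $\sigma^{-1}(s(\sigma x)) - s(x)$. Matching this against the definition $f^\vee(\phi) = \phi \circ f$ of the stated isomorphism $\Hom(C,A) \xrightarrow{\sim} \Hom(A^\vee,C^\vee)$, I expect to find precisely $d_\sigma = -(c_\sigma)^\vee$, which is the assertion.

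The main obstacle — and really the only substantive issue — is bookkeeping of signs and of the identifications $H^1(\Gamma, -) \cong (-)_\Gamma$ (or $\Hom(\Gamma, (-)_\Gamma)$) via evaluation at $\sigma$: the connecting homomorphism in homology versus cohomology, the inversion $\sigma \leftrightarrow \sigma^{-1}$ that appears when one passes between a cocycle formula and a $1$-boundary formula, and the convention for which map in $0 \to A \to B \to C \to 0$ is "first." I would pin down a single convention at the outset (say, cocycle value on $\sigma$ given by $\iota(c_\sigma \pi(b)) = \sigma b - s\pi(b)$ rearranged as above) and verify that the same convention, applied verbatim to the dual sequence, produces the minus sign; the negative in the statement is exactly the artifact of duality reversing the roles of source and target while the evaluation-at-$\sigma$ identification does not. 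Everything else — continuity of $s$ and $s^\vee$, $\Lambda$-finiteness ensuring the relevant $\Hom$'s and duals behave well, exactness of Pontryagin duality — is standard and can be invoked without comment.
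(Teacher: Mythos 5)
Your general strategy---write the cocycle via a set-theoretic splitting and track it through Pontryagin duality---is the same as the paper's, and your instinct that the sign is the crux is correct. But there is a genuine gap at the central step, not merely bookkeeping. You describe $s^\vee \colon B^\vee \to C^\vee$ as ``a continuous $\zp$-linear section of $\pi^\vee$''; the relation $s^\vee \circ \pi^\vee = \mathrm{id}$ makes it a \emph{retraction} (left inverse) of $\pi^\vee$, not a section of $\iota^\vee \colon B^\vee \to A^\vee$, and these play different roles. Consequently your cocycle formula $\pi^\vee(d_\sigma(\phi)) = \sigma\bigl(s^\vee(\sigma^{-1}\phi)\bigr) - s^\vee(\phi)$ for $\phi \in A^\vee$ does not type-check: $s^\vee$ takes arguments in $B^\vee$, not $A^\vee$, and the shape of the formula (apply a map to $\phi \in A^\vee$, land in $B^\vee$, write as $\pi^\vee$ of something) is that of a \emph{section} of $\iota^\vee$, which $s^\vee$ is not. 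The later expression $\langle \phi, \iota(c_\sigma(x))\rangle$, pairing $\phi \in A^\vee$ against an element of $B$, has a similar mismatch. These are not typos to be fixed in passing; they mark exactly the spot where the argument needs an idea you have not supplied.

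That missing idea is also where the sign actually originates, and it is not the place you point to (``$\sigma s\sigma^{-1}$ versus $\sigma^{-1}s\sigma$''). The paper introduces the complementary retraction $t \colon B \to A$ determined by $\iota \circ t = 1 - s\circ\pi$. Its dual $t^\vee \colon A^\vee \to B^\vee$ \emph{is} a section of $\iota^\vee$, so the dual extension class is represented by $\chi^*(\gamma)$ with $\pi^\vee(\chi^*(\gamma)(\phi)) = \phi\circ(t^\gamma - t)$, where $t^\gamma = \gamma\, t\, \gamma^{-1}$. On the other side, $\pi^\vee\bigl(\chi(\gamma)^\vee(\phi)\bigr) = \phi\circ\iota^{-1}\circ(s^\gamma - s)\circ\pi = \phi\circ(t - t^\gamma)$, using $\iota\circ t^\gamma = 1 - s^\gamma\circ\pi$. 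The minus sign is precisely the identity $t^\gamma - t = -\chi(\gamma)\circ\pi$ relating the $\Gamma$-inequivariance of the retraction to that of the section. If you prefer to stay with $s^\vee$, you could instead prove as a separate lemma that the retraction-based cocycle $d_\sigma(\iota^\vee\psi) = \sigma s^\vee(\sigma^{-1}\psi) - s^\vee(\psi)$ (for $\psi \in B^\vee$, note the domain) represents the \emph{negative} of the dual extension class, and then check directly that $d_\sigma = (c_\sigma)^\vee$; but that lemma is the substance absent from your sketch, and without it the sign is unaccounted for.
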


\begin{proof}
	For any set-theoretic map $\kappa \colon M \to N$
	of $\Lambda$-modules and $\gamma \in \Gamma$, we define $\kappa^{\gamma} \colon M \to N$ by
	 $\kappa^{\gamma}(m) = \gamma \kappa(\gamma^{-1}m)$ for each $m \in M$.
	For any continuous function $s$ splitting $\pi$, the class of our exact sequence is that of the
	cocycle $\chi$ given by $\iota \circ \chi(\gamma)= s^{\gamma} - s$.
	Let $t \colon B \to A$ be the unique map satisfying $\iota \circ t = 1 - s \circ \pi$.
	The class of the Pontryagin dual sequence is that of the cocycle $\chi^* \colon \Gamma
	\to \Hom(A^{\vee},C^{\vee})$ given by
	$$
		\pi^{\vee}(\chi^*(\gamma)(\phi)) = (t^{\vee})^{\gamma}(\phi) - t^{\vee}(\phi)
		= \phi \circ (t^{\gamma} -t)
	$$
	for $\phi \in A^{\vee}$.  Taking the dual of
	$\chi(\gamma)$ and noting that $\iota \circ t^{\gamma} = 1-s^{\gamma} \circ \pi$, we obtain
	$$
		\pi^{\vee}(\chi(\gamma)^{\vee}(\phi)) = \phi \circ \iota^{-1} \circ (s^{\gamma} - s) \circ \pi
		= \phi \circ (t-t^{\gamma}),
	$$
	as desired.
\end{proof}

\section{The proof} \label{proof}

To begin with, note that the class group of the maximal $p$-ramified extension $\Omega$ of $F$ is trivial as $\Omega$ contains the Hilbert class field of every number field in $\Omega$. 
Let us set
$$
	\mc{W} = \dirlim{E \subset \Omega} \bigoplus_{w \in Q_E} \mup(E_w),
$$
where the direct limit runs over the number fields $E$ containing $F$ in $\Omega$.
Note that $\mc{W}$ is $H$-induced from $\bigoplus_{Q_{\infty}} \mup$, so
$$
	\mc{W}^H = \bigoplus_{u \in Q_{\infty}} \mup
$$
and $H^i(H,\mc{W}) = 0$ for $i \ge 1$.  
For any $u \in Q^+_{\infty}$, the $u$-component of $\mc{W}^H$ will be $\mup$
if the prime $v$ below $u$ is inert in $F/F^+$, and it will be $\mup \oplus \mup$, with
complex conjugation
acting by switching coordinates, if the prime $v$ splits in $F/F^+$.  Either way, the minus 
part is $\mup$.  Therefore, we have
$$
	(\mc{W}^H)^- = \bigoplus_{u \in Q^+_{\infty}} \mup.
$$

Now set $\omega = \mc{W}/\mup$ with respect to the diagonal embedding of $\mup$.  We then obtain a long exact sequence in $H$-cohomology, the minus
part of which begins
\begin{equation} \label{seqcompray}
	1 \to \mup \to \bigoplus_{u \in Q^+_{\infty}} \mup \to (\omega^H)^-
	\to A_{\infty}^- \to 0,
\end{equation}
the term $A_{\infty}^-$ being $H^1(H,\mup)^-$ by Kummer theory.

We make the following claim.

\begin{proposition} \label{rayprop}
	There is an isomorphism $\phi \colon 
	A_{\infty,\mf{q}}^- \to (\omega^H)^-$
	of $\La$-modules fitting into an isomorphism of exact sequences of $\Lambda$-modules
	$$
		\SelectTips{cm}{}
		\xymatrix{
			1 \ar[r] & \mup \ar[r] \ar@{=}[d] & \bigoplus_{Q^+_{\infty}} \mup
			\ar[r] \ar@{=}[d] & A_{\infty,\mf{q}}^- \ar[r] \ar[d]^{\phi} & A_{\infty}^-
			\ar[r] \ar@{=}[d] & 0 \\
			1 \ar[r] & \mup \ar[r] & \bigoplus_{Q^+_{\infty}} \mup \ar[r] &
			(\omega^{H})^- \ar[r] & A_{\infty}^- \ar[r] & 0
		}
	$$
	from \eqref{rayclassseq} to \eqref{seqcompray}.
\end{proposition}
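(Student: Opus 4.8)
The plan is to identify the ray class group $A_{\infty,\mf{q}}^-$ with a Kummer-theoretic object built from $\mc{W}$, matching the defining four-term exact sequence \eqref{rayclassseq} of the ray class group against the $H$-cohomology sequence \eqref{seqcompray} of $1 \to \mup \to \mc{W} \to \omega \to 1$. First I would recall the arithmetic meaning of $A_{\infty,\mf{q}}^-$: over a number field $E \subset \Omega$ with $\mf{q}_E$ the conductor, the minus part of the $\mf{q}_E$-ray class group sits in the sequence obtained from $1 \to \mc{O}_{E,\mf{q}}^{\times} \to \mc{O}_E^{\times} \to (\mc{O}_E/\mf{q}_E)^{\times} \to \Cl_{E,\mf{q}} \to \Cl_E \to 0$, and since the minus part of the global units is exactly the roots of unity, on $p$-parts of minus parts one gets $0 \to \mup(E) \to \bigoplus_{u \mid v} \mup(E_u) \to A_{E,\mf{q}}^- \to A_E^- \to 0$. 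On the other hand, because the class group of $\Omega$ is trivial and $H^1(H,\mup) \cong A_{\infty}$ (Kummer theory, using $\mc{O}_\Omega^\times$ divisible enough), the $H$-cohomology of $1 \to \mup \to \mc{W} \to \omega \to 1$, together with $H^i(H,\mc{W}) = 0$ for $i \ge 1$ and $\mc{W}^H = \bigoplus_{Q_\infty}\mup$ computed in the excerpt, yields precisely \eqref{seqcompray} on minus parts. So both four-term sequences have the same outer terms $\mup$ and $A_\infty^-$ and the same second term $\bigoplus_{Q^+_\infty}\mup$; the content is to produce the isomorphism $\phi$ on the third terms making \emph{everything} commute.

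The cleanest route is to realize both middle objects as classifying the same Kummer classes, cut out by local conditions at $Q_\infty$. Concretely: $A_{\infty,\mf{q}}^-$ is the minus part of the limit over $E$ of $\mf{q}_E$-ray class groups, and by Kummer theory for ray class groups (or directly: an element of the $\mf{q}_E$-ray class group that maps to a principal ideal is given by an element $\alpha \in E^\times$, well-defined modulo $\mc{O}_{E,\mf{q}_E}^\times$, and the map to $A_E$ forgets the congruence condition mod $\mf{q}_E$), one can describe $A_{\infty,\mf{q}}^-$ as the group of pairs consisting of a Kummer class $x \in H^1(H,\mup)^- = A_\infty^-$ together with a compatible choice, for each $u \in Q^+_\infty$, of a lift of the image of $x$ in the local term — which is exactly what $(\omega^H)^-$ records, since $\omega^H$ fits in $0 \to (\mc{W}^H/\mup) \to \omega^H \to H^1(H,\mup) \to H^1(H,\mc{W}) = 0$. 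I would make this precise by constructing $\phi$ on each finite level: given a class in $A_{E,\mf{q}_E}^-$ represented by $\alpha \in E^\times$ with $(\alpha) = \mf{a}^{(p^n)}$, send it to the image in $\omega^H$ of the cocycle $\sigma \mapsto \sigma(\alpha^{1/p^n})/\alpha^{1/p^n}$ recorded together with the tuple of its reductions at the primes $u \mid \mf{q}_E$, i.e.\ the class of $\alpha$ in $\bigoplus_{w \in Q_E}\mup(E_w)$ modulo $(E^\times)^{p^n}$, which by construction dies in $\mc{W}/\mup$ exactly on the diagonal. Passing to the limit over $E$ and taking minus parts gives $\phi$, and the commutativity of the left square (on $\mup$ and $\bigoplus \mup$) and the right square (to $A_\infty^-$) is immediate from the constructions. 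That $\phi$ is an isomorphism then follows from the five lemma applied to the two four-term sequences, since the two outer pairs of vertical maps are identities.

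The main obstacle I expect is bookkeeping the identifications so that the left-hand square genuinely commutes with the \emph{same} identification $\mc{W}^H = \bigoplus_{u \in Q^+_\infty}\mup$ on minus parts in both rows — in particular that the diagonal embedding $\mup \hookrightarrow \bigoplus_{Q^+_\infty}\mup$ coming from the unit sequence \eqref{rayclassseq} matches the one coming from $\mc{W} = \mup \oplus \omega$'s diagonal, and that the split-versus-inert behavior of primes $v$ in $F/F^+$ (handled in the excerpt by noting the minus part is $\mup$ either way) is compatible with the residue-field description of $\mup(E_w)$. A secondary subtlety is verifying that taking minus parts is exact here, which is fine since $p$ is odd so $\Z[\Gal(F_\infty/F_\infty^+)]$-modules split into $\pm$ eigenspaces; and that the limit over $E \subset \Omega$ of ray class groups computes $A_{\infty,\mf{q}}$ without extra contributions, which holds because $\Omega$ contains the Hilbert class field of every subfield so $H^1(H,-)$ and the direct limit interact as expected. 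Once these identifications are pinned down, the proof is essentially a diagram chase plus the five lemma.
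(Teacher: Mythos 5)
Your plan is essentially the paper's: you define $\phi$ by reducing a $p^n$th root of a generator of $\mf{a}^{p^n}$ at the $\mf{q}$-primes to get an element of $\mc{W}$, pass to $\omega^H$, and then verify the diagram commutes and apply the five lemma, which is exactly what the paper does. The "bookkeeping" you flag — independence of the choices of $\mf{a}$, $a$, $n$, and the $p^n$th root, together with the minus-part normalization ($s\mf{a}=\mf{a}^{-1}$, $s(a)=a^{-1}$) and the split-versus-inert matching at each $u$ — is precisely the content of the paper's verification, so the only thing your sketch leaves to do is carry those checks through carefully.
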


\begin{proof}
	We define $\phi$ explicitly.  Let $\mf{a}$ be a nonzero finitely generated
	fractional ideal of 
	$\mc{O}_{\infty}$ that is relatively prime to $p$ and the ideals in $Q_{\infty}$, and 
	suppose that its
	ray class $[\mf{a}]_{\mf{q}}$ lies in $A_{\infty,\mf{q}}^-$.  Let $[\mf{a}]$ denote 
	its class in $A_{\infty}^-$.  Let $n$ be such that $\mf{a}^{p^n}$ is principal, 
	therefore generated by some $a \in F_{\infty}^{\times}$.  We assume, as is possible, that
	$\mf{a}$ and $a$ are chosen so that $s\mf{a} = \mf{a}^{-1}$ and $s(a) = a^{-1}$,
	where $s$ denotes the generator of $\Gal(F_{\infty}/F_{\infty}^+)$.
	Let $\alpha$ be such that 
	$\alpha^{p^n} = a$, and note that $\alpha \in \Omega^{\times}$, since $\alpha$ 
	generates a $p$-ramified extension of $F_{\infty}$.  
	Since $[\mf{a}]_{\mf{q}}$ has $p$-power order, we have
	$a^{p^t} \equiv 1 \bmod \mf{q}_{\infty}$ for sufficiently large $t$.  Therefore, the image 
	$$
		\bar{\alpha} \in \dirlim{E \subset \Omega} (\mc{O}_E/\mf{q}_E)^{\times}
	$$
	of $\alpha$ has $p$-power order, so it lies in the $p$-power torsion subgroup $\mc{W}$
	of the direct limit. 
	For $\sigma \in H$, we have
	$\bar{\alpha}^{\sigma-1} = \alpha^{\sigma-1} \in \mu_{p^n}$, so the image 
	$\phi([\mf{a}]_{\mf{q}})$ of $\bar{\alpha}$ in $\omega$ lies in $\omega^{H}$.  
	
	Note that if $\tau \in \Gal(\Omega/F^+)$,
	then the class $\tau([\mf{a}]_{\mf{q}})$ is represented by $\tau\mf{a}$, the $p^n$th power
	of which is generated
	by $\tau(a)$, and this has $\tau(\alpha)$ as a $p^n$th root.  This, in turn, has image
	$\tau(\bar{\alpha})$ in $\mc{W}$.  In particular,
	since $[\mf{a}]_{\mf{q}}$ lies in the minus part of $A_{\infty,\mf{q}}$, the element
	$\phi([\mf{a}]_{\mf{q}})$ lies in the minus part of $\omega^{H}$.
	Since $\phi$ is multiplicative given consistent choices, 
	if we can show that $\phi([\mf{a}]_{\mf{q}})$ is well-defined, we will then have
	constructed a homomorphism $\phi$ of $\Lambda$-modules.
	
	We show that $\phi$ is well-defined.  First, suppose that $n$ is replaced by some $m \ge n$
	and $a$ by $a^{p^{m-n}}$.  Then $\alpha$ can still be taken to be the same element.
	On the other hand, for a fixed $n$, we may replace $\alpha$ by a product of it with
	a $p^n$th root of unity. However, this does not affect the image of $\bar{\alpha}$ in 
	$\omega$.  Next, any generator of $\mf{a}^{p^n}$ in $(F_{\infty}^{\times})^-$
	will differ from $a$ by some element of the minus part 
	$\mup$ of $\mc{O}_{\infty}^{\times}$, the $p^n$th root of this element will be another 
	$p$-power root
	of unity, and again this will not change the image of $\bar{\alpha}$ in $\omega$.  Finally, any
	finitely generated fractional ideal $\mf{b}$ prime to $Q_{\infty}$ and $p$ and
	representing the class $[\mf{a}]_{\mf{q}}$ has the form $\mf{b} = \mf{a}(c)$,
	where $c \in F_{\infty}^{\times}$ with $c \equiv 1 \bmod \mf{q}_{\infty}$.  
	Then $\mf{b}^{p^n}$ is generated by
	$ac^{p^n}$, which has $p^n$th root $\alpha c$.  The image of this element in the direct
	limit of the groups $(\mc{O}_E/\mf{q}_E)^{\times}$ is just $\bar{\alpha}$.  Having
	checked independence of all choices, we have that $\phi$ is well-defined.
	
	It remains only to check the commutativity of the diagram.  For this, suppose that
	$(\alpha_u)_u \in \bigoplus_{u \in Q_{\infty}^+} \mup$, and let $a \in \mc{O}_{\infty}$
	with $s(a) = a^{-1}$ be such that  
	$a$ has image $\alpha_u$ in the residue field of $\mc{O}_{\infty}$ 
	for each of the one or two primes of $Q_{\infty}$ lying over each $u \in Q_{\infty}^+$.  Then
	the class $[(a)]_{\mf{q}}$ is represented by $(a)$
	and has image $(\alpha_u)_u$ in $\bigoplus_{u \in Q_{\infty}^+} \mup$ and therefore
	the image of that in $\omega^H$.  Hence, the middle square commutes.  As for
	the right-hand square, let $[\mf{a}]_{\mf{q}} \in A_{\infty,\mf{q}}^-$, and note
	that $\phi([\mf{a}]_{\mf{q}})$ is the image of $\bar{\alpha}$ as before.  The connecting
	homorphism $\omega^{H} \to H^1(H, \mup)$ then takes $\phi([\mf{a}]_{\mf{q}})$
	to the cocycle $\tau \mapsto \bar{\alpha}^{\tau - 1}$ for $\tau \in H$, which as we
	have already mentioned
	is just the Kummer character $\tau \mapsto \alpha^{\tau-1}$.
	Since $\mf{a}^{p^n} = (a)$ and $\alpha^{p^n} = a$,
	this character has image $[\mf{a}]$ in $A_{\infty}^-$, as desired.
\end{proof}

Having shown that the class of 
\begin{equation} \label{shortcompseq}
	1 \to \bigoplus_{u \in Q_{\infty}^+}\!\!{}'\, \mup \to (\omega^H)^- \to A_{\infty}^- \to 0
\end{equation}
agrees with the class of \eqref{shortrayseq},
the first connecting homomorphism in the long exact sequence in the $\Gamma$-cohomology of \eqref{shortcompseq} will now be the negative of the Pontryagin dual of $\kappa^0$ (as follows from Lemma \ref{pdnegclass}).  In order to compare this with $\delta^0$, we will reconstruct the diagram \eqref{frobdiag} in a manner that incorporates the latter sequence.

Let us start by defining the needed object $\mc{V}$, which most simply put is just the twist 
$\mc{W}(-1)$.  More meticulously, for each $v \in Q$, we can set 
$$
	\mc{V}_v =  \dirlim{E \subset \Omega} \bigoplus_{\substack{w \in Q_E\\ w \mid v}}
	\qp/\zp
$$
and then take
$$
	\mc{V} = \bigoplus_{v \in Q} \mc{V}_v = 
	\dirlim{E \subset \Omega} \bigoplus_{w \in Q_E} \qp/\zp.
$$
Let $\nu$ be the quotient of $\mc{V}$ by $\qp/\zp$ embedded diagonally.  
As with $\mc{W}$, note that $\mc{V}$ is the $H$-induced module 
of $\bigoplus_{Q_{\infty}} \qp/\zp$ (in that $\mc{W}$ is the
Tate twist of $\mc{V}$).
Therefore, the long exact sequence
in $H$-cohomology attached to
\begin{equation} \label{nuquot}
	0 \to \qp/\zp \to \mc{V} \to \nu \to 0
\end{equation}
gives rise to a short exact sequence
\begin{equation} \label{neededseq}
	0 \to \bigoplus_{u \in Q_{\infty}}\!{}'\, \qp/\zp \to \nu^H
	\to \mf{X}_{\infty}^{\vee} \to 0.
\end{equation}

There is a commutative diagram 
\begin{equation} \label{bigdiag}
	\SelectTips{cm}{} \small
	\xymatrix@C=10pt@R=15pt{
		&&&  & 
		& H^1(\Gamma,\mc{V}^H ) \ar[r] \ar[d]^{\wr}_{\beta} 
		& H^1(\Gamma,\nu^{H}) \ar[r] \ar[d]^{\wr}_{\theta}
		& H^1(\Gamma,\mf{X}_{\infty}^{\vee}) \ar[d]^{\wr}_{\psi} \ar[r] & 0 \\
		0 \ar[r] & \qp/\zp \ar[r]  & \mc{V}^G \ar[r] 
		& \nu^{G} \ar[r] 
		& H^1(G,\qp/\zp) \ar[r] 
		& H^1(G, \mc{V}) \ar[r]
		& H^1(G,\nu)  \ar[r]
		& H^2(G,\qp/\zp) \ar[r] & 0 
	}
	\normalsize
\end{equation}
in which $\beta$ and $\theta$ are inflation maps, the
lower row is the long exact sequence in $G$-cohomology
arising from \eqref{nuquot}, and $\psi$ is the unique map making the diagram commute. 
The right exact top row is induced on $\Gamma$-cohomology by the right exact sequence
$$
	\mc{V}^H \to \nu^H \to \mf{X}_{\infty}^{\vee} \to 0
$$ 
giving rise to \eqref{neededseq}, noting that $\Gamma$ has cohomological dimension $1$.
The surjectivity of $\beta$ follows from the inflation-restriction sequence and the fact that
$H^1(H,\mc{V}) = 0$.   Similarly, the cokernel of $\theta$ injects into 
$H^1(H,\nu) \cong H^2(H,\qp/\zp)$, and the latter group is trivial by weak Leopoldt 
(\cite{iwa-ext}, but see \cite[Theorem 10.3.22]{nsw} for this form).
The interested reader may check that $\psi$ arises as the map
$E_2^{1,1} \to E^2$ (noting that $E_2^{i,j} = 0$ for $i \ge 2$)
in the Hochschild-Serre spectral sequence
 $$
	E_2^{i,j} = H^i(\Gamma,H^j(H,\qp/\zp)) \Rightarrow E^{i+j} = H^{i+j}(G,\qp/\zp),
$$
(i.e., there is a morphism from the corresponding spectral sequence with $\nu$ coefficients to this
sequence with a shift of degree).

Replacing the last three terms in the bottom row of \eqref{bigdiag} using the isomorphisms with
the terms of the top row and mapping the resulting exact sequence to 
the long exact sequence in $\Gamma$-cohomology attached to \eqref{neededseq}, 
we obtain a diagram
\begin{equation} \label{mapseq}
	\small
	\SelectTips{cm}{}
	\xymatrix@C=9.5pt{
	0 \ar[r] & \bigoplus_Q' \qp/\zp \ar[r] \ar@{^{(}->}[d] & \nu^{G} \ar[r] \ar@{=}[d]
	& \mf{X}^{\vee} \ar[r]^-{\iota} \ar@{->>}[d]^{\Res} 
	& H^1(\Gamma,\bigoplus_{Q_{\infty}} \qp/\zp)\ar[r] \ar@{->>}[d]^{-\pi}
	& H^1(\Gamma,\nu^H) \ar[r] \ar@{=}[d] 
	& H^1(\Gamma,\mf{X}_{\infty}^{\vee}) \ar[r] \ar@{=}[d] & 0 \\
	0 \ar[r] & (\bigoplus_{Q_{\infty}}' \qp/\zp)^{\Gamma} \ar[r] & \nu^{G} \ar[r] 
	& (\mf{X}_{\infty}^{\vee})^{\Gamma}  \ar[r]^-{\partial} 
	& H^1(\Gamma,\bigoplus_{Q_{\infty}}' \qp/\zp) \ar[r]
	& H^1(\Gamma,\nu^H) \ar[r]
	& H^1(\Gamma,\mf{X}_{\infty}^{\vee}) \ar[r] & 0.
	}
	\normalsize
\end{equation}
Here, the map $\Res$ is restriction on homomorphism groups, 
and the map $\pi$ is induced by the quotient map
$$
	\bigoplus_{u \in Q_{\infty}} \qp/\zp \to \bigoplus_{u \in Q_{\infty}}\!\!{}'\, \qp/\zp.
$$  
Clearly, $\Res$ has kernel $\Gamma^{\vee}$ and $\pi$ has kernel an infinite quotient of
$\Gamma^{\vee}$.
However, it is not immediately clear that the middle square in the 
diagram commutes, so we prove this.

\begin{proposition} \label{mapdiagcomm}
	The diagram \eqref{mapseq} is commutative.
\end{proposition}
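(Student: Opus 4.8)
The plan is to check that each of the five squares in \eqref{mapseq} commutes. For every square other than the middle one---the square whose top and bottom edges are $\iota$ and $\partial$ and whose sides are $\Res$ and $-\pi$---commutativity is immediate from the functoriality of the constructions involved: the naturality of inflation, the naturality of the maps on cohomology induced by the morphisms of $\Lambda$-modules at hand, and the compatibility of restriction with connecting homomorphisms, this last point handling the square just to the left of the middle one. The real content is therefore the commutativity of the middle square, which I would establish by a cocycle computation.

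Recall that $\iota = \beta^{-1} \circ j_*$, where $j \colon \qp/\zp \to \mc{V}$ is the diagonal embedding---so that $\nu = \mc{V}/j(\qp/\zp)$ and, on $H$-invariants, $\pi \colon \mc{V}^H \to \bigoplus_{Q_\infty}'\qp/\zp$ is the cokernel of $j$---while $j_* \colon H^1(G,\qp/\zp) \to H^1(G,\mc{V})$ and $\beta \colon H^1(\Gamma,\mc{V}^H) \xrightarrow{\sim} H^1(G,\mc{V})$ is inflation; recall also that $\partial$ is the connecting homomorphism in the $\Gamma$-cohomology of \eqref{neededseq}. Fix $\chi \in \mf{X}^\vee = \Hom(G,\qp/\zp)$. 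Since $H^1(H,\mc{V}) = 0$, the $H$-cocycle $h \mapsto j(\chi(h))$ is a coboundary, so one may choose $v \in \mc{V}$ with $(h-1)v = j(\chi(h))$ for all $h \in H$. Then the image $\bar{v}$ of $v$ in $\nu$ is fixed by $H$, and the connecting map $\nu^H \to H^1(H,\qp/\zp) = \mf{X}_\infty^\vee$ sends $\bar{v}$ to $\chi|_H = \Res(\chi)$; in other words, $\bar{v}$ is a lift of $\Res(\chi)$ along $\nu^H \twoheadrightarrow \mf{X}_\infty^\vee$.

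The crucial point is that, since $\chi$ is a homomorphism into an abelian group with trivial $G$-action, $\chi(g^{-1}hg) = \chi(h)$ for all $g \in G$ and $h \in H$, whence $(h-1)(g-1)v = j\bigl(\chi(g^{-1}hg) - \chi(h)\bigr) = 0$. Two consequences follow. First, for a lift $\tilde{\gamma} \in G$ of $\gamma \in \Gamma$ the element $(\tilde{\gamma}-1)v$ lies in $\mc{V}^H$, hence maps into $\bigoplus_{Q_\infty}'\qp/\zp \subset \nu^H$, and $\partial(\Res(\chi))$ is represented by the cocycle $\gamma \mapsto \pi\bigl((\tilde{\gamma}-1)v\bigr)$. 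Second, the $G$-cocycle $g \mapsto j(\chi(g)) - (g-1)v$, which is cohomologous to $j \circ \chi$ and vanishes on $H$, takes values in $\mc{V}^H$, so it is inflated from the $\Gamma$-cocycle $\gamma \mapsto j(\chi(\tilde{\gamma})) - (\tilde{\gamma}-1)v$, whose class in $H^1(\Gamma,\mc{V}^H)$ is $\iota(\chi)$. Applying $\pi$ and using $\pi \circ j = 0$, one finds that $\pi_*(\iota(\chi))$ is represented by $\gamma \mapsto -\pi\bigl((\tilde{\gamma}-1)v\bigr)$, so that $(-\pi)_*(\iota(\chi)) = \partial(\Res(\chi))$, which is the commutativity of the middle square; a different choice of $v$ alters $v$ by an element of $\mc{V}^H$ and changes all the representatives above only by coboundaries. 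I expect the main obstacle to be precisely the two bookkeeping claims that $(\tilde{\gamma}-1)v \in \mc{V}^H$ and that the modified $G$-cocycle is inflated from $\Gamma$---both resting on the identity $(h-1)(g-1)v = 0$, which is exactly where one uses that $\chi$ is a homomorphism.
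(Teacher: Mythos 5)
Your proposal is correct and takes essentially the same approach as the paper's proof: both isolate the middle square, choose $v \in \mc{V}$ (the paper's $a$) with $(h-1)v = j(\chi(h))$ for $h \in H$, and compare $\partial(\Res(\chi))$ with $\pi_*(\iota(\chi))$ via the same cocycle representatives, finding $\pi \circ \iota = -\partial \circ \Res$. Your explicit verification of $(h-1)(g-1)v = 0$ is a slightly more careful justification of the two bookkeeping facts that the paper asserts by noting that $\chi|_H$ is $\Gamma$-fixed, but the argument is the same.
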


\begin{proof}
	The only square for which commutativity is nonobvious is the third, so we focus on it.
	Let $\chi \in G^{\vee} = \mf{X}^{\vee}$, and
	restrict $\chi$ to $H$ (so, to an element of $(\mf{X}_{\infty}^{\vee})^{\Gamma}$). 
	Find $a \in \mc{V}$ such that the image $\bar{a}$ of $a$ in $\nu$ is fixed by $H$
	and maps to $\chi|_H$ under the natural map
	$\nu^H \to \mf{X}_{\infty}^{\vee}$. That is, $\chi(\tau) =  (\tau-1)a$ for all $\tau \in 
	H$.  Then $\partial(\chi|_H)$ is the unique homomorphism such that
	$$
		\partial(\chi|_H)(\gamma) = (\tilde{\gamma}-1)\bar{a}
	$$
	for every $\gamma \in \Gamma$ and any lift of it to $\tilde{\gamma} \in G$.
	This homomorphism takes values in $\bigoplus_{Q_{\infty}}' \qp/\zp$ inside
	$\nu^H$, since $\chi|_H$ is fixed by $\Gamma$.
	
	On the other hand, we may view $\chi$ as an element of $H^1(G,\mc{V})$ via
	the diagonal embedding of $\qp/\zp$ in $\mc{V}$.  Since $\chi(\tau) = (\tau-1)a$
	for $\tau \in H$, its image 
	$\iota(\chi)$ in $H^1(\Gamma,\bigoplus_{Q_{\infty}} \qp/\zp)$ under the inverse of
	inflation is represented by a cocycle given by
	\begin{equation} \label{shapiro}
		\iota(\chi)(\gamma) = \chi(\tilde{\gamma}) - (\tilde{\gamma}-1)a
	\end{equation}
	for $\gamma \in \Gamma$, viewing $\bigoplus_{Q_{\infty}} \qp/\zp$ as $\mc{V}^H$.
	We then have
	$$
		\pi(\iota(\chi))(\gamma) = -(\tilde{\gamma}-1)\bar{a}
	$$ 
	since 
	$\chi(\gamma) \in \qp/\zp$, so $\pi \circ \iota = -\partial \circ \Res$, as claimed.
\end{proof}

We now define 
$$
	\kappa^0 \colon \bigoplus_{v \in Q^+}\!\!{}^0\, \Gamma_v \to (\mf{X}_{\infty}^+)_{\Gamma}
$$ 
to be the negative of the plus part of the Pontryagin dual of the connecting homomorphism
$\partial$
and
$$
	\kappa \colon \bigoplus_{v \in Q^+} \Gamma_v \to \mf{X}^+
$$
to be the plus part of the Pontryagin dual of the map $\iota$ found in \eqref{mapseq}.
This definition of $\kappa^0$ agrees with that Section \ref{conjecture}.
The plus part of the Pontryagin dual of the third commutative square in \eqref{mapseq} 
then yields the commutative square
\begin{equation} \label{commsq}
	\SelectTips{cm}{} \xymatrix@C=20pt@R=32pt{
		\bigoplus_{v \in Q^+}^0 \Gamma_v \ar[r]^-{\kappa^0} \ar[d] & 
		(\mf{X}_{\infty}^+)_{\Gamma} \ar[d] \\
		\bigoplus_{v \in Q^+} \Gamma_v \ar[r]^-{\kappa} & \mf{X}^+,
	}
\end{equation}
in which the right and left vertical maps are the canonical injections with cokernel $\Gamma$
and contained in $\Gamma$, respectively.  
In the proof of the following, which implies Theorem \ref{reciprocity}, we shall 
see that $\kappa = \delta$ and therefore $\kappa^0 = \delta^0$, so the diagram 
\eqref{commsq} agrees with \eqref{frobdiag}.

We may now prove our main theorem.

\begin{proof}[Proof of Theorem \ref{reciprocity}]
	By the commutativity of \eqref{commsq}, we need only
	show that $\kappa$ is the map $\delta$ of the introduction.  Note that
	we have already described a map $\iota$ in the proof of
	Proposition \ref{mapdiagcomm}, the plus part of which may be identified with
	the Pontryagin dual of $\kappa$.  So, we consider the plus part
	$$
		\iota^+ \colon (\mf{X}^+)^{\vee} \to 
		H^1\Big(\Gamma,\bigoplus_{u \in Q_{\infty}^+} \qp/\zp\Big)
	$$  
	of $\iota$.
	The map $\iota^+$ may be viewed as a collection of 
	maps $\iota^+_v \colon (\mf{X}^+)^{\vee} \to \Gamma_v^{\vee}$ for each $v \in Q^+$
	via the isomorphism
	$$
		H^1\Bigg(\Gamma,\bigoplus_{\substack{u \in Q_{\infty}^+ \\ u \mid v}} \qp/\zp\Bigg)
		\xrightarrow{\sim} \Hom(\Gamma_v,\qp/\zp)
	$$	
	that takes a cocycle $f$ to its restriction to $\Gamma_v$, which has values in
	the diagonal $\qp/\zp$ in the direct sum.
	
	By \eqref{shapiro}, for $\chi \in (\mf{X}^+)^{\vee}$, which we may think of as
	an element of $G^{\vee}$, and $\gamma_v \in \Gamma_v$, 
	we may write
	$$
		\iota^+_v(\chi)(\gamma_v) = \chi(\tilde{\gamma}_v) - (\tilde{\gamma}_v-1)a_v.
	$$
	for some lift $\tilde{\gamma}_v$ of $\gamma_v$ in $G$ and $a_v \in \mc{V}_v$.  
	By definition of $\mc{V}_v$, there exists a number field $E$ containing $F$ such
	that $a_v$ lies in the direct sum $\bigoplus_{w \mid v} \qp/\zp$ over primes $w$ of $E$
	lying over $v$, and $\tilde{\gamma}_v$ acts on the sum by permuting the coordinates
	of its elements.  In particular, the $w$-coordinate of $\iota^+_v(\chi)(\gamma_v)$ in this direct sum is
	$$
		\chi(\tilde{\gamma}_v) - (a_v)_{\tilde{\gamma}_v^{-1}(w)} + (a_v)_w.
	$$
	If we pick $\tilde{\gamma}_v$
	to lie in the decomposition group of $G$ at a place over $w$, then the $w$-coordinate of
	$\iota^+_v(\chi)(\gamma_v)$ will equal 
	$\chi(\tilde{\gamma}_v)$, but then every
	$w$-coordinate for $w$ lying over  $v$ must have this value since $\iota^+_v(\chi)$ takes
	values on $\Gamma_v$ in the diagonally embedded $\qp/\zp$ in $\mc{V}_v$.  
	As $\tilde{\gamma}_v$ will restrict to an element
	of the decomposition group $D_v$ in $\mf{X}^+$, the $v$-component $\delta_v$ of $\delta$
	satisfies
	$$
		\delta_v^{\vee}(\chi)(\gamma_v) = \chi(\tilde{\gamma}_v) = 
		\iota_v^+(\chi)(\gamma_v).
	$$
	For the sums of the Pontryagin duals, this says exactly that $\kappa = \delta$.
\end{proof}
	
\renewcommand{\baselinestretch}{1}

\end{document}